\date{ }
\newcommand{\ga}{\Gamma}
\newtheorem{theorem}{Theorem}[section]
\newtheorem{lemma}[theorem]{Lemma}\newtheorem{corollary}[theorem]{Corollary}
\title{\bf Characterization of some alternating groups by order and the largest element order}
\author{{\bf Ali Mahmoudifar  \&  Ayoub Gharibkhajeh}
\\ Department of Mathematics, Tehran North Branch, Islamic Azad University, Tehran, Iran \\ 
e-mails:
a\_mahmoodifar@iau-tnb.ac.ir~~alimahmoudifar@gmail.com\\a.gharibkhajeh@gmail.com }
\begin{document}
\maketitle 
\begin{abstract}
The prime graph (or Gruenberg-Kegel
graph) of a finite group $G$ is a familiar graph. In this paper first, we investigate the structure of the finite groups with a non-complete prime graph. Then we prove that every alternating group $A_{n}$, where $n\leq20$ or $n\in\{23,24\}$ is determined by its order and its largest element order.
\end{abstract}
\textbf{2000 AMS Subject Classification}: $20$D$05$, $20$D$60$,
20D08. \\
 \textbf{Keywords :} Finite simple group, prime graph, the largest element orders, alternating group.

\section{Introduction}

Throughout this paper, $G$ denotes a finite group. The set of all prime divisors of $|G|$ is denoted by $\pi(G)$. Also, the set of all element orders of $G$ called the \textit{Spectrum} of $G$ is denoted by $\pi_{e}(G)$. The \textit{prime graph} (or \textit{Gruenberg-Kegel
graph}) of $G$ which is denoted
by $\ga(G)$ is a simple graph whose vertex set is $\pi(G)$ and two
distinct primes $p$ and $q$ are adjacent in $\ga(G)$ if and only
if $pq\in\pi_{e}(G)$. A subset $\rho$ of vertices of $\ga(G)$
is called an independent subset of $\ga(G)$, whenever every two distinct
primes in $\rho$ are non-adjacent in $\ga(G)$.

Let  $m_{1}(G)$ be the largest element
order of $G$, i.e. $m_{1}(G)$ is the maximum of $\pi_{e}(G)$. In general, if $k=|\pi_e(G)|$, then for $2\leq i\leq k$, we define $m_i(G)$ as follows:
\begin{equation*}
m_i(G)=\max\{ a ~|~ a\in\pi_e(G)\setminus\{m_1(G),\dots , m_{i-1}(G)\}\}
\end{equation*}

For a finite simple group $S$ there are a lot of results about the numbers $m_1(S)$, $m_2(S)$ and $m_3(S)$ (see \cite{Guest,Kantor}). Also, the characterization of finite simple groups by their arithmetical properties has been researched widely. For instance, Mazurov et al. in \cite{prov-conj}, show that every finite simple group $S$ can be determined by $|S|$ and $\pi_e(S)$. Then some authors tried to investigate the characterization of finite simple groups by using fewer conditions. In \cite{italia,comm}, it is proved that there some finite simple groups $S$ which are determined by $|S|$ and $m_1(S)$. For more results see \cite{jas,suz,L2q,K4,shi}.

In this paper first, we consider the finite groups whose prime graphs are not complete. Then as an application we prove the following theorem:

\begin{theorem}\label{Main 2}
	Let $G$ be a finite group and $A_{n}$ be an alternating group such that $n\leq20$ or $n\in\{23,24\}$. Then $G\cong A_n$ if and only if $|G|=|A_n|$ and $m_1(G)=m_1(A_n)$.
	\end{theorem}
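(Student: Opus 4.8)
The \emph{only if} direction is immediate, since isomorphic groups share the same order and the same spectrum, and hence the same largest element order. For the converse, assume $|G|=|A_n|$ and $m_1(G)=m_1(A_n)$, and write $M:=m_1(A_n)$. From $|G|=|A_n|$ we obtain $\pi(G)=\pi(A_n)$ together with the exact Sylow orders of $G$. The plan is to force the prime graph $\ga(G)$ to be non-complete, to apply the structural result of the previous section so as to extract a non-abelian simple section, and then to use the prescribed order $|A_n|$ together with the value $M$ both to identify that section as $A_n$ and to collapse the resulting normal series down to $G\cong A_n$.

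First I would establish that $\ga(G)$ is non-complete. For each admissible $n$ one checks directly that the two largest primes $p>q$ in $\pi(A_n)$ satisfy $p+q>n$, so that they are already non-adjacent in $\ga(A_n)$, and, more importantly, that $pq>M$. For instance, when $n=24$ one verifies $m_1(A_{24})=420$, while the two largest primes give $23\cdot 19=437>420$. Since $m_1(G)=M<pq$, the group $G$ can have no element of order $pq$, so $p$ and $q$ are non-adjacent in $\ga(G)$. Thus $\ga(G)$ is non-complete and $\{p,q\}$ is an independent subset of $\ga(G)$.

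Next I would invoke the structure theorem for finite groups with non-complete prime graph proved in the previous section. Applied to $G$ together with the independent pair $\{p,q\}$, it yields that either $G$ is solvable, or $G$ admits a normal series $\se$ in which $H$ and $G/K$ are solvable and $K/H$ is a non-abelian simple group. The solvable alternative is discarded by an order-and-spectrum argument, using the known shape of solvable groups whose prime graph is non-complete: the factorization of $|A_n|$ and the constraints such a structure imposes on the prime-graph components are incompatible with the prescribed $|G|$ and $M$. In the remaining case, since $K/H$ is a section of $G$ its order divides $|A_n|$ and $\pi(K/H)\sub\pi(A_n)$, so running through the finite list of non-abelian simple groups whose order divides $|A_n|$ leaves only finitely many candidates for $K/H$.

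The heart of the argument, and the step I expect to be the main obstacle, is to eliminate every candidate except $A_n$ and at the same time to force $H=1$ and $G/K=1$. Here the hypothesis $m_1(G)=M$ does the decisive work: if $H\neq 1$, then choosing a prime $r\in\pi(H)$ and combining an $r$-element of the nilpotent subgroup $H$ with a suitable $p$- or $q$-element coming from $K/H$ produces an element whose order strictly exceeds $M$ (or else contradicts the exact equality $|A_n|=|H|\,|K/H|\,|G/K|$); the same mechanism rules out a nontrivial solvable quotient $G/K$ and, case by case, every simple candidate $K/H$ whose order is a proper divisor of $|A_n|$. Once $K/H\cong A_n$ is forced, $|K/H|=|A_n|=|G|$ gives $|H|=|G/K|=1$, whence $G\cong A_n$. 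I anticipate that the case-by-case elimination of the simple candidates for the larger values of $n$, where several groups of Lie type and sporadic groups have order dividing $|A_n|$, will be the most laborious part, each case being settled by comparing $M$ and $|A_n|$ with the corresponding largest element order and order of the candidate.
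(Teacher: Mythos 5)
Your overall strategy (use $pq>m_1(A_n)$ for the two largest primes to get a non-adjacent pair in $\ga(G)$, extract a non-abelian simple section, then eliminate candidates case by case) is the same as the paper's, but the structural step you rely on is not available and this creates a genuine gap. You invoke a dichotomy ``either $G$ is solvable, or $G$ has a normal series $1\lhd H\lhd K\lhd G$ with $H$, $G/K$ solvable and $K/H$ non-abelian simple'' with $H$ nilpotent; that is the Gruenberg--Kegel/Williams theorem, which requires $\ga(G)$ to be \emph{disconnected}. Here you only know that one pair $\{p,q\}$ is non-adjacent in a possibly connected graph, and what the paper actually proves (Theorem 3.2) is weaker: either (1) $p$ and $q$ are separated between a normal subgroup $N$ and the socle of $G/N$ (with no simple section controlling both primes), or (2) there is a simple $S$ with $S\leq G/N\leq{\rm Aut}(S)$ and merely $|\pi(S)\cap\rho|\geq 2$. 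To exclude case (1) and to force $\rho\subseteq\pi(S)$ one must verify the arithmetic conditions of Corollary 3.4, namely $p\nmid(q^j-1)$ and $q\nmid(p^i-1)$ for all relevant prime powers dividing $|G|$ (these come from a Frattini/Frobenius argument giving $q\mid |N|_p-1$); your proposal never checks these, and without them the argument does not start.

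The second gap is in the elimination. Without the conclusion $\rho\subseteq\pi(S)$ you cannot restrict to ``simple groups whose order divides $|A_n|$ and contains the top primes'': the bare divisibility condition admits many candidates ($A_5$, $L_2(7)$, etc.\ for every $n\geq 7$), whereas the paper uses Zavarnitsine's classification of simple groups with $\rho\subseteq\pi(S)$ and $|S|\mid|A_n|$ to get the short lists of Table 1. Moreover, your mechanism for ruling out $H\neq 1$ --- ``combining an $r$-element of $H$ with a $p$-element of $K/H$ produces an element of order exceeding $M$'' --- is precisely the statement that these elements commute, which is false in general and is the nontrivial content of the paper's Lemma 2.5 (one must first exclude $|S|\mid|{\rm Aut}(R)|$ for a Sylow subgroup $R$ of the normal part). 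Finally, some cases are not killed by $H\neq 1$ at all: for $n=8$ with $S\cong L_3(4)$ one has $|S|=|A_8|$, so the normal parts are trivial automatically, and the contradiction is instead $m_1(L_3(4))=7\neq 15$. So the skeleton is right, but the two load-bearing lemmas (the arithmetic criterion forcing $\rho\subseteq\pi(S)$, and the commuting-elements lemma via Frattini) are missing, and the case analysis cannot be completed as described.
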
 
We note that our main tool for considering Theorem \ref{Main 2} is the fact that when $n\leq20$ or $n\in\{23,24\}$, the hypothesis in $m_1(G)=m_1(A_n)$ implies that $\Gamma(G)$ is not complete (see Table 1). However, if $n\in\{21, 22\}$ or $n\geq25$, then the condition $m_1(G)=m_1(A_{n})$ do not straightly show that $\Gamma(G)$ is a non-complete graph (see Lemma \ref{Lem4.1}). For instance if $n=21$, then $m_1(G)=m_1(A_{21})=420>19\cdot17$ which do not necessarily cause $19$ and $17$ are adjacent or even non-adjacent in $\Gamma(G)$. We note that
in the appendix, there is a procedure in Maple software for computing $m_1(A_n)$. 

Recall that ${\rm Soc}(G)$ denotes the socle of $G$ (the subgroup generated by all the
minimal nontrivial normal subgroups of $G$). The other notation and terminologies in this paper are standard and the reader is referred to \cite{key-13,key-12} if necessary.

\section{Preliminary Results}

\begin{lemma}\cite[Lemma 4]{key-6}\label{sec:Preliminary-Results-A_n and S_n}
In $S_{m}$ (resp. in $A_{m}$) there is an element of order $n=p_{1}^{\alpha_{1}}\,p_{2}^{\alpha_{2}}\,\cdots\,p_{s}^{\alpha_{s}}$,
where $p_{1},p_{2},\ldots,p_{s}$ are distinct primes and $\alpha_{1},\alpha_{2},\ldots,\alpha_{s}$
are naturals, if and only if $p_{1}^{\alpha_{1}}+p_{2}^{\alpha_{2}}+\cdots+p_{s}^{\alpha_{s}}\leq m$
(resp. $p_{1}^{\alpha_{1}}+p_{2}^{\alpha_{2}}+\cdots+p_{s}^{\alpha_{s}}\leq m$
for odd $n$ and $p_{1}^{\alpha_{1}}+p_{2}^{\alpha_{2}}+\cdots+p_{s}^{\alpha_{s}}\leq m-2$
for even $n$). \end{lemma}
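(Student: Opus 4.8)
The plan is to translate everything into the cycle structure of permutations and use the fact that the order of a permutation equals the least common multiple of its cycle lengths. Since $p_1,\dots,p_s$ are distinct primes, the prime powers $p_1^{\alpha_1},\dots,p_s^{\alpha_s}$ are pairwise coprime, so the least common multiple of a collection of cycle lengths equals $n$ precisely when, for each $i$, some cycle length is divisible by $p_i^{\alpha_i}$ while no cycle length carries a larger power of any $p_i$. For the ``if'' direction in $S_m$ I would simply exhibit an element: take disjoint cycles of lengths $p_1^{\alpha_1},\dots,p_s^{\alpha_s}$ on $\sum_{i} p_i^{\alpha_i}\le m$ points and fix the rest; their lengths are pairwise coprime, so the order is exactly their product $n$.

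For the ``only if'' direction in $S_m$, suppose $g\in S_m$ has order $n$ and let $c_1,\dots,c_t$ be its nontrivial cycle lengths, so $\sum_j c_j\le m$ and $\mathrm{lcm}(c_1,\dots,c_t)=n$. Because $p_i^{\alpha_i}$ divides the lcm, I can choose for each $i$ a cycle $c_{j(i)}$ with $p_i^{\alpha_i}\mid c_{j(i)}$. Grouping the primes by the cycle they are assigned to, the distinctness of the $p_i$ makes the assigned prime powers pairwise coprime, so for each used cycle $j$ the product $\prod_{i:\,j(i)=j} p_i^{\alpha_i}$ divides $c_j$. Combining this with the elementary inequality $a_1+\cdots+a_k\le a_1\cdots a_k$ for integers $a_\ell\ge2$ (an easy induction from $(a-1)(b-1)\ge1$) gives $\sum_{i:\,j(i)=j} p_i^{\alpha_i}\le c_j$; summing over the used cycles yields $\sum_i p_i^{\alpha_i}\le\sum_j c_j\le m$.

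For $A_m$ I would first record the parity rule: a permutation is even if and only if it has an even number of even-length cycles. When $n$ is odd every $p_i^{\alpha_i}$ is odd, so the cycles in the $S_m$-construction all have odd length and their product is already an even permutation; conversely any element of order $n$ in $A_m$ also lies in $S_m$, so the bound $\sum_i p_i^{\alpha_i}\le m$ from the previous step applies verbatim. When $n$ is even, exactly one prime (say $p_1=2$) is even; the direct construction has a single even-length cycle and is therefore odd, so I would repair the parity by appending one extra transposition, which preserves the order (since $2\mid 2^{\alpha_1}$) but consumes two more points, giving the condition $\sum_i p_i^{\alpha_i}\le m-2$.

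The main work, and the step I expect to be the real obstacle, is the ``only if'' direction for even $n$ in $A_m$, where the $S_m$-bound $\sum_i p_i^{\alpha_i}\le m$ must be sharpened to $\sum_i p_i^{\alpha_i}\le m-2$. Here I would measure the slack $\sum_j c_j-\sum_i p_i^{\alpha_i}=\sum_j\bigl(c_j-\sum_{i:\,j(i)=j}p_i^{\alpha_i}\bigr)$, a sum of non-negative terms, and show it is at least $2$. Since $2\mid n$ there is an even-length cycle, and since $g$ is even there are in fact at least two even-length cycles, while the prime $2$ is assigned to only one of them; hence some even-length cycle $j^{*}$ carries no factor of $2$ in its assignment. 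If $j^{*}$ is assigned no prime at all it contributes $c_{j^{*}}\ge2$ to the slack; otherwise its assigned prime powers are all odd, their product $P$ divides the even number $c_{j^{*}}$ forcing $c_{j^{*}}\ge 2P$, and its contribution is at least $c_{j^{*}}-P\ge P\ge3$. Either way the slack is at least $2$, giving $\sum_i p_i^{\alpha_i}\le\sum_j c_j-2\le m-2$ and completing the proof.
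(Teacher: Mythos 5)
Your proof is correct and complete. Note, however, that the paper itself gives no proof of this lemma at all: it is quoted verbatim from Zavarnitsine--Mazurov \cite[Lemma 4]{key-6}, so there is no internal argument to compare against; your write-up is the standard cycle-type argument (order equals the lcm of cycle lengths, parity equals the parity of the number of even-length cycles) and it fills this gap in a self-contained way. The two delicate points are both handled properly: the inequality $a_1+\cdots+a_k\le a_1\cdots a_k$ for integers $a_\ell\ge 2$, which converts divisibility of cycle lengths into the additive bound, and the sharpened bound $\sum_i p_i^{\alpha_i}\le m-2$ for even $n$ in $A_m$, where you correctly exploit that an even permutation of even order must have at least two even-length cycles, only one of which can absorb the prime $2$, so the spare even cycle contributes slack at least $2$ whether or not odd prime powers are assigned to it.
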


\begin{lemma}\label{pl1}\cite[Lemma 1]{key-7} Let a finite group
$G$ have a normal series of subgroups $1\leq K\leq M\leq G$, and
the primes $p$, $q$ and $r$ are such that $p$ divides $|K|$,
$q$ divides $|M/K|$, and $r$ divides $|G/M|$. Then $p$, $q$,
and $r$ cannot be pairwise nonadjacent in $\ga(G)$. \end{lemma}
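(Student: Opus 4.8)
The plan is to argue by contradiction. Suppose $p$, $q$, $r$ are pairwise nonadjacent in $\ga(G)$, so $G$ has no element of order $pq$, $qr$ or $pr$; I will manufacture such an element. The driving observation is that nonadjacency forces fixed-point-free coprime actions: if $x$ is a nontrivial $q$-element and $V$ a $p$-group normalized by $x$, then a nonidentity element of $C_V(x)$ would commute with $x$ and yield an element of order $pq$; hence $C_V(x)=1$. Thus, once we isolate an elementary abelian $p$-section $V$, every nontrivial $q$-element and every nontrivial $r$-element of $G$ acts fixed-point-freely on $V$.

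First I would reduce the bottom factor to an elementary abelian $p$-group while keeping the three primes in place. Let $P$ be a Sylow $p$-subgroup of $K$. Since $K\trianglelefteq G$, the Frattini argument gives $G=K\,N_G(P)$, and the induced isomorphisms $M/K\cong(M\cap N_G(P))/(K\cap N_G(P))$ and $G/M\cong N_G(P)/(M\cap N_G(P))$ show that $1\le P\le (M\cap N_G(P))\le N_G(P)$ is again a normal series whose three factors are divisible by $p$, $q$, $r$ respectively (passing to a $p$-group quotient disturbs neither the $q$- nor the $r$-part). As nonadjacency is inherited by subgroups, I may replace $G$ by $N_G(P)$ and then replace $P$ by a minimal normal subgroup $V$ of $G$ contained in $P$. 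After renaming, $K=V$ is an elementary abelian $p$-group, minimal normal in $G$, with $V\le M\trianglelefteq G$, $q\mid |M/V|$ and $r\mid|G/M|$.

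Next I would build a Frobenius configuration acted on by an $r$-element. Pick an $r$-element $\tau\in G$ whose image in $G/M$ has order $r$; since $M\trianglelefteq G$, $\tau$ acts on $M$ and on $V$. Because $q\mid|M/V|$ and $V$ is a $p$-group, $q\mid|M|$, so $M$ has a nontrivial Sylow $q$-subgroup, which I may take to be $\tau$-invariant by coprime action, say $Q_0$. Then $Q_0\cap V=1$, and as $Q_0$ consists of $q$-elements it acts fixed-point-freely on $V$, so $W:=V\rtimes Q_0$ is a Frobenius group with kernel $V$ and nontrivial complement $Q_0$; moreover $W$ is $\tau$-invariant and $\tau$ acts on it coprimely (as $r\nmid|W|$). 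Now if $\tau$ centralized no nonidentity element of $W$, then by Thompson's theorem (a finite group admitting a fixed-point-free automorphism of prime order is nilpotent) $W$ would be nilpotent, forcing $[V,Q_0]=1$ and contradicting the fixed-point-free action of $Q_0$ on $V$. Hence $C_W(\tau)\neq1$. Taking $1\neq w\in C_W(\tau)$: its $q$-part, if nontrivial, commutes with $\tau$ and gives an element of order $qr$; otherwise $w$ is a nontrivial $p$-element commuting with $\tau$ and gives an element of order $pr$. Either way we contradict the assumed nonadjacency, completing the argument.

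I expect the extraction of a $\tau$-invariant nontrivial $q$-subgroup to be the main obstacle. When $r\nmid|M|$ this is immediate from the standard coprime-action fact that $\tau$ normalizes some Sylow $q$-subgroup of $M$, and the Thompson-theorem finish above applies verbatim. The delicate case is $r\mid|M|$, where $\tau$ no longer acts coprimely on $M$ and a $\tau$-invariant $q$-subgroup is not handed to us for free; there one must argue more carefully, for instance by a further reduction that separates $q$ from $r$ using the normality of $M$ in $G$, before the same Frobenius-plus-Thompson mechanism can be run. Verifying that each reduction preserves both the three-factor structure and the pairwise nonadjacency is routine but must be checked at every step.
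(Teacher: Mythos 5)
The paper offers no proof of this lemma at all; it is quoted from Vasil'ev \cite[Lemma 1]{key-7}, so your attempt must stand on its own. Its architecture is sound: the Frattini reduction to an elementary abelian minimal normal $p$-subgroup $V$ is carried out correctly, and so is the endgame (a Frobenius group $W=V\rtimes Q_0$ inside $M$, and fixed points of an $r$-element acting on $W$ yielding an element of order $pr$ or $qr$). But there is a genuine hole at exactly the point you flag yourself: the construction of a $\tau$-invariant nontrivial $q$-subgroup $Q_0\leq M$ rests on coprime action, hence needs $r\nmid|M|$; nothing in the hypotheses excludes $r\mid|M|$, and ``argue more carefully\dots by a further reduction'' is not an argument. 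The missing idea is a second Frattini argument, applied this time to the normal subgroup $M$ itself: take $Q\in{\rm Syl}_q(M)$; then $G=M\,N_G(Q)$, so $N_G(Q)/N_M(Q)\cong G/M$ has order divisible by $r$, and therefore $N_G(Q)$ contains an element $\tau'$ of order exactly $r$ (Cauchy) which normalizes $Q$. Since $\tau'$ also normalizes $V$ (normal in $G$), no coprimality is needed, the case distinction disappears, and your Frobenius argument runs uniformly with $Q$ in place of $Q_0$. This is precisely the device the paper itself uses to prove Corollary \ref{lem natalia}, and taking $\tau'$ of prime order also cures the second problem below.

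That second, smaller defect is in your use of Thompson's theorem, which concerns fixed-point-free automorphisms of \emph{prime} order, while your $\tau$ is only an $r$-element. Its order may be forced to be $r^{a}$ with $a>1$ (already for $G$ cyclic of order $r^{2}$ and $M$ its subgroup of order $r$, no preimage of a generator of $G/M$ has order $r$), and $C_W(\tau)=1$ gives no control over $C_W(\tau^{r^{a-1}})$, so you cannot pass to a power after assuming fixed-point-freeness. The repair is to run your dichotomy from the start with $\sigma:=\tau^{r^{a-1}}$, the power of $\tau$ of order exactly $r$: it still normalizes $W$, and for the final contradiction it is irrelevant whether the acting $r$-element has nontrivial image in $G/M$. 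If $C_W(\sigma)\neq1$ you get an element of order $pr$ or $qr$ exactly as you argued; if $C_W(\sigma)=1$, Thompson's theorem now legitimately makes $W$ nilpotent, which is absurd since $Q_0$ acts fixed-point-freely on $V\neq1$. With these two patches (Frattini on a Sylow $q$-subgroup of $M$, and an acting element of order exactly $r$) your argument becomes a correct proof.
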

\begin{lemma}\label{Fro}(See, for example, \cite{key-12})
Let $G = F \rtimes H$ be a Frobenius group with
kernel $F$ and complement $H$. Then $|H|$ divides $|F| - 1$.
\end{lemma}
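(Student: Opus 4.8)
The plan is to exploit the conjugation action of the complement $H$ on the kernel $F$ and then count orbits. Since $F$ is normal in $G$, conjugation by elements of $H$ gives an action of $H$ on $F$ that fixes the identity and permutes the nonidentity elements, so $H$ acts on the set $F\setminus\{1\}$, which has $|F|-1$ elements. The heart of the matter is that this action is \emph{fixed-point-free} on $F\setminus\{1\}$: no nontrivial element of $H$ centralizes a nontrivial element of $F$. Granting this, the orbit--stabilizer theorem forces every orbit to have full size $|H|$, and since the orbits partition $F\setminus\{1\}$ into blocks of equal size $|H|$, we immediately obtain that $|H|$ divides $|F|-1$.

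To establish the fixed-point-free property I would use the defining structural feature of a Frobenius group, namely the trivial-intersection condition $H\cap H^{g}=1$ for every $g\in G\setminus H$, together with $F\cap H=1$, which holds because $G=F\rtimes H$ is an (internal) semidirect product. Suppose, for contradiction, that some $1\neq h\in H$ commutes with some $1\neq f\in F$. Then $f^{-1}hf=h$, so $h\in H^{f}$, and of course $h\in H$; hence $h\in H\cap H^{f}$. But $f\neq 1$ lies in $F$, so $f\notin H$, and the Frobenius condition gives $H\cap H^{f}=1$, contradicting $h\neq 1$. Phrased via stabilizers, this shows $C_{H}(f)=1$ for every $f\in F\setminus\{1\}$, which is exactly the statement that the conjugation action has trivial point stabilizers.

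With fixed-point-freeness in hand, the counting is immediate. For each $f\in F\setminus\{1\}$ the $H$-orbit of $f$ has cardinality $|H|/|C_{H}(f)|=|H|/1=|H|$ by orbit--stabilizer. The distinct orbits partition $F\setminus\{1\}$, so $|F|-1$ is a sum of terms each equal to $|H|$, whence $|H|$ divides $|F|-1$, as claimed.

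The only genuinely delicate point is the justification of the fixed-point-free property, and this is entirely a matter of which formulation of ``Frobenius group'' one takes as primitive. If the definition is the permutation-theoretic one (a transitive, nonregular action in which only the identity fixes two points), then one must first recover the abstract trivial-intersection condition and identify the kernel as the union of $\{1\}$ with the fixed-point-free permutations before running the argument above; if instead the abstract definition with $H\cap H^{g}=1$ is taken as given, the argument is as written. In either case no deep input is needed beyond the orbit--stabilizer theorem, so I expect the main effort to be bookkeeping about the chosen definition rather than any substantive obstacle.
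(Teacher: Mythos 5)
Your proof is correct. The paper does not prove this lemma at all --- it is stated as a known fact with a citation to Isaacs' \emph{Finite Group Theory} --- so there is no internal argument to compare against; your orbit-counting proof (conjugation of $H$ on $F\setminus\{1\}$ has trivial stabilizers $C_H(f)=1$ by the trivial-intersection property $H\cap H^{f}=1$ for $f\notin H$, hence all orbits have size $|H|$) is exactly the standard textbook argument, and you correctly flag the only definitional subtlety, namely that the trivial-intersection formulation of ``Frobenius group'' is what makes the fixed-point-freeness immediate.
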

\begin{corollary}\label{lem natalia} Let $G$ be a finite group and
$N$ be a normal subgroup of $G$. Then the following assertions hold:

$1)$ Let $p$ and $q$ be two distinct primes in $\pi(G)$. If $p\in\pi(N)$, $q\in\pi(G/N)$
and $\{p, q\}$ is an independent subset of $\Gamma(G)$, then $q\mid(|N|_{p}-1)$.

$2)$ Let $p$, $q$ and $r$ be three pariwise distinct primes in $\pi(G)$. If $p\in\pi(N)$ and $\{q ,r\}\subseteq\pi(G/N)$
and $G/N$ is solvable, then $p$, $q$
and $r$ cannot be pairwise nonadjacent in $\ga(G)$.
\end{corollary}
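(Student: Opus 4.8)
The plan is to prove the two assertions separately, leaning on the Frobenius machinery of Lemma \ref{Fro} for the first and on the three-factor criterion of Lemma \ref{pl1} for the second.

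For assertion $1)$ I would begin with a Sylow $p$-subgroup $P$ of $N$, so that $|P| = |N|_p$. Since $N \triangleleft G$, the Frattini argument gives $G = N\,N_G(P)$, whence $G/N \cong N_G(P)/N_N(P)$ and in particular $q \mid |N_G(P)|$. By Cauchy's theorem I may then choose $x \in N_G(P)$ of order $q$; as $\langle x\rangle$ normalizes $P$ and $\langle x\rangle \cap P = 1$, the product $H := P \rtimes \langle x\rangle$ is a subgroup of $G$ with $P$ normal in it. The non-adjacency of $p$ and $q$ is the crux: if $x$ centralized some nontrivial $y \in P$, then, the orders $q$ and $o(y)$ being coprime, $xy$ would have order $q\cdot o(y)$, a multiple of $pq$, giving an element of order $pq$ in $G$ — contradicting that $\{p,q\}$ is independent in $\Gamma(G)$. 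Hence $C_P(x) = 1$, and since $\langle x\rangle$ has prime order this upgrades to a fixed-point-free action of the whole complement, so $H$ is a Frobenius group with kernel $P$ and complement $\langle x\rangle$. Lemma \ref{Fro} then yields $q = |\langle x\rangle| \mid |P| - 1 = |N|_p - 1$.

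For assertion $2)$ I would argue by contradiction, assuming $p$, $q$ and $r$ pairwise nonadjacent in $\Gamma(G)$, and aim to violate Lemma \ref{pl1} by exhibiting a normal series $1 \leq N \leq M \leq G$ that carries the three primes in its three factors. Since $p \in \pi(N)$, the bottom factor already accommodates $p$, and the real work is to separate $q$ and $r$ between the middle and top factors. This is exactly where the solvability of $G/N$ enters: pulling back a chief series of $G/N$ produces a normal series $N = G_0 \triangleleft G_1 \triangleleft \cdots \triangleleft G_k = G$ with each $G_i$ normal in $G$ and each factor $G_i/G_{i-1}$ elementary abelian, hence supported on a single prime. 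I would let $m$ be the largest index for which $G_m/G_{m-1}$ is a $q$- or an $r$-group, and let $t$ be its prime; since both $q$ and $r$ divide $|G/N|$ and are distinct, the other prime $t' \in \{q,r\}\setminus\{t\}$ must be supported on a factor of index at most $m-1$, so $t' \mid |G_{m-1}/N|$ and $m \geq 2$. Setting $M = G_{m-1}$ then gives $p \mid |N|$, $t' \mid |M/N|$ and $t \mid |G/M|$ with both $N$ and $M$ normal in $G$, so Lemma \ref{pl1} forbids $p$, $q$, $r$ from being pairwise nonadjacent — the desired contradiction.

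The main obstacle is concentrated in assertion $1)$: the whole argument hinges on converting the bare hypothesis ``no element of order $pq$'' into the Frobenius configuration $C_P(x) = 1$. The Frattini argument is what guarantees the needed $q$-element actually normalizes $P$ (so that $P\langle x\rangle$ is a group and ``fixed-point-free action'' is even meaningful), and the primality of $q$ is what turns $C_P(x)=1$ into fixed-point-freeness for every nontrivial element of the complement. Assertion $2)$ then requires only the bookkeeping that isolates $q$ and $r$ in distinct factors of the lifted chief series; once that series is in hand the statement reduces cleanly to Lemma \ref{pl1}.
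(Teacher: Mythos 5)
Your proposal is correct. Assertion~$1)$ is handled exactly as in the paper: Frattini's argument to place a $q$-element in $N_G(P)$, the observation that absence of elements of order $pq$ forces the action on $P$ to be fixed-point-free, and then Lemma~\ref{Fro}; your extra remarks on why $C_P(x)=1$ and why primality of $q$ upgrades this to fixed-point-freeness of the whole complement are welcome detail that the paper leaves implicit. For assertion~$2)$ you take a genuinely different route. The paper stays inside $N_G(P)$: it takes a Hall $\{q,r\}$-subgroup $\bar H$ of the solvable quotient $G/N$, uses $O_q(\bar H)\neq 1$ or $O_r(\bar H)\neq 1$ to extract a subgroup $\bar H_1\rtimes\bar H_2$ separating $q$ and $r$, transfers it into $N_G(P)/N_N(P)$ via a second Frattini argument, and applies Lemma~\ref{pl1} to the resulting series $1< N_N(P)< T_1< T_2$ inside the subgroup $T_2$ of $G$. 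You instead pull back a chief series of the solvable group $G/N$ to a normal series of $G$ itself, use the fact that solvable chief factors are elementary abelian (hence one-prime) to isolate $q$ and $r$ in distinct factors, and apply Lemma~\ref{pl1} directly to $1\leq N\leq M\leq G$. Your version buys a cleaner application of Lemma~\ref{pl1} --- the series consists of genuine normal subgroups of $G$, so there is no need to pass through a proper subgroup and then transfer non-adjacency back to $\Gamma(G)$ --- at the cost of invoking chief series rather than Hall subgroups; both arguments are sound and of comparable length.
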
 

\begin{proof}
1) Let $P$ be a Sylow $p$-subgroup of $N$. By Frattini's argument, $G/N\cong N_{G}(P)/N_{N}(P)$. In view of the hypothesis, we conclude that $N_G(P)$
contains an element of order $q$. So $N_{G}(P)$ contains a subgroup isomorphic to the semidirect product
$P\rtimes Q$ where $Q$ is a cyclic subgroup of order $q$. On the
other hand, by the assumption, $G$ does not contain any element of order $pq$. Hence, $Q$ acts fixed point freely on $P$. Thus, $P\rtimes Q$ is a Frobenius group and so by Lemma \ref{Fro}, $q\mid(|P|-1)$ which implies that $q\mid(|N|_{p}-1)$. 

2) Put $\bar{G}=G/N$ and $\rho=\{q, r\}$. Recall that $\bar{G}$ is a solvable group and $\rho\subseteq\pi(\bar{G})$. Take a Hall $\rho$- subgroup $\bar{H}$ of $\bar{G}$. We know that $O_q(\bar{H})\neq 1$ or $O_r(\bar{H})\neq 1$. So without loss of generality, we may assume that $\bar{G}$ contains a subgroup isomorphic to the semidirect product $\bar{H_1}\rtimes \bar{H_2}$ in which $\pi(\bar{H_1})=\{q\}$ and $\pi(\bar{H_2})=\{r\}$.

Now let $P$ be a Sylow $p$-subgroup of $N$. Similar to the previous case, it follows that $\bar{G}\cong N_{G}(P)/N_{N}(P)$. Recall that $\bar{H_1}\rtimes \bar{H_2}$ is a subgroup of $\bar{G}$. Consequently, $N_{G}(P)/N_{N}(P)$ contains a subgroup isomorphic to $\bar{H_1}\rtimes \bar{H_2}$. Hence, there is a normal series $1<N_{N}(P)<T_1<T_2$ in $N_G(P)$ such that $T_1/N_{N}(P) \cong \bar{H_1}$ and $T_2/N_{N}(P) \cong \bar{H_1}\rtimes \bar{H_2}$. Also, by the above argument, $p\in \pi(N_N(P))$, $\pi(T_1/N_{N}(P))=\{q\}$ and $\pi(T_2/T_1)=\{r\}$. Therefore, by Lemma \ref{pl1}, we get that the subset $\{p, q, r\}$ can not be an independent subset of $\Gamma(G)$, which completes the proof.
\end{proof}

\begin{lemma} \label{lem:S<G/M}Let $G$ be a finite group, $M$ be
	a normal subgroup of $G$ and $G/M$ contain a subgroup $S$ which is isomorphic to a simple
	group. If $R$ is a Sylow $r$-subgroup of $M$, then one of the following
	assertions holds:
	
	$1)$ $|S|\mid|{\rm Aut}(R)|$
	
	$2)$ If $a\in\pi_{e}(S)$ and $r^{\alpha}\in\pi_{e}(R)$, then ${\rm lcm}(r^{\alpha},a)\in\pi_{e}(G)$.
\end{lemma}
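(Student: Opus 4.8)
The plan is to consider a Sylow $r$-subgroup $R$ of $M$ and exploit the action of $G$ on $R$ by conjugation. By Frattini's argument, since $R$ is a Sylow $r$-subgroup of the normal subgroup $M$, we have $G=MN_G(R)$, and hence $G/M\cong N_G(R)/N_M(R)$. The subgroup $S\leq G/M$ therefore lifts to a subgroup $H\leq N_G(R)$ with $N_M(R)\trianglelefteq H$ and $H/N_M(R)\cong S$. The key object will be the conjugation homomorphism $\varphi\colon H\to {\rm Aut}(R)$ whose image records how $H$ acts on $R$.

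First I would analyze the kernel $C=\ker\varphi=C_H(R)$, the centralizer of $R$ in $H$. The homomorphism $\varphi$ induces an injection $H/C\hookrightarrow {\rm Aut}(R)$, so $|H/C|$ divides $|{\rm Aut}(R)|$. The dichotomy then arises from comparing $C$ with $N_M(R)$. If $N_M(R)\leq C$ (equivalently, if $N_M(R)$ centralizes $R$, which at least holds modulo the part of $N_M(R)$ that acts trivially), then the simple quotient $S\cong H/N_M(R)$ is a quotient of $H/C$, so since $S$ is simple and $H/C$ embeds in ${\rm Aut}(R)$, we would obtain $|S|\mid|{\rm Aut}(R)|$, giving conclusion $(1)$. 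The cleaner way to organize this is via simplicity: the image of $N_M(R)$ in $H/C$ is a normal subgroup, and $S$ being simple forces the composite $H/C\to H/N_M(R)\cong S$ to be either trivial on $S$ or to realize $S$ as a section of $H/C$, and in the latter case $|S|$ divides $|H/C|$ which divides $|{\rm Aut}(R)|$.

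In the remaining case, the action of $H$ on $R$ does not factor through ${\rm Aut}(R)$ faithfully enough to land in conclusion $(1)$, meaning some element mapping to a nontrivial element of $S$ actually centralizes a nontrivial element of $R$. Concretely, there exists an element $h\in H$ whose image in $S$ has order $a\in\pi_e(S)$ and which commutes with some element $x\in R$ of order $r^{\alpha}\in\pi_e(R)$. Then the element $hx=xh$ of $G$ has order equal to ${\rm lcm}(r^{\alpha},a)$, since $\langle x\rangle$ is an $r$-group while the order of $h$ modulo $C$ supplies the factor $a$ coprime consideration, yielding ${\rm lcm}(r^{\alpha},a)\in\pi_e(G)$ and hence conclusion $(2)$.

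I expect the main obstacle to be making the dichotomy airtight: specifically, controlling the relationship between $C_H(R)$, $N_M(R)$, and the $r$-part versus $r'$-part of the relevant element orders, so that in case $(2)$ one genuinely produces a single element of order ${\rm lcm}(r^{\alpha},a)$ rather than merely two commuting elements whose orders are not coprime. I would handle this by first reducing to an element $h$ of prime-power order $a$ representing the relevant cyclic section of $S$, arranging that $\langle h\rangle$ and $\langle x\rangle$ generate an abelian group in which the orders combine to the least common multiple; the simplicity of $S$ is what guarantees that failure of $(1)$ forces some such nontrivially-acting-yet-centralizing configuration to exist.
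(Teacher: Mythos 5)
Your proposal is correct and follows essentially the same route as the paper: Frattini's argument realizes $S$ as a section $H/N_M(R)$ of $N_G(R)/N_M(R)$, and simplicity of $S$ forces the centralizer of $R$ either to meet that section trivially (so $S$ embeds into a quotient of $N_G(R)/C_G(R)\hookrightarrow{\rm Aut}(R)$, giving assertion (1)) or to cover it (so for each $a\in\pi_{e}(S)$ there is an element of order $a$ centralizing all of $R$, giving assertion (2)). Two points to tighten: the dichotomy should be phrased as ``the normal subgroup $C_H(R)N_M(R)/N_M(R)$ of $H/N_M(R)\cong S$ is trivial or is all of $S$'' rather than ``$N_M(R)\leq C$ or not'' (and the map you call the composite $H/C\to H/N_M(R)$ does not exist in general; one must pass to the common quotient $H/(C\,N_M(R))$), and in case (2) the element of order ${\rm lcm}(r^{\alpha},a)$ comes from the standard fact that a finite abelian group containing elements of orders $m$ and $n$ contains an element of order ${\rm lcm}(m,n)$ --- your proposed reduction to prime-power $a$ does not apply, since $a\in\pi_{e}(S)$ need not be a prime power.
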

\begin{proof}
	Put $N=N_{G}(R)$, $L=N_{M}(R)$
	and $C=C_{G}(R)$. By Frattini's argument, $G/M\cong N/L$. Hence by the assumption we get that $N/L$ contains a subgroup isomorphic to the simple group $S$. Let $K$ be a subgroup of $N$ such that $K/L\cong S$ is a simple group. Since $K/L$ is a simple subgroup of
	$N/L$ and $CL/L$
	is a normal subgroup of $N/L$, it follows that either $K/L\cap CL/L=1$ or $K/L\leq CL/L$. We consider each possibiltiy:
	
	1) Let $K/L\cap CL/L=1$. Then we obtain the following relation:
\begin{equation*}
K/L\cong\frac{(K/L)(CL/L)}{CL/L}\leq\frac{N/L}{CL/L}\cong N/CL.
\end{equation*}

So $|K/L|\mid |N/CL|$. On the other hand:
\begin{equation*}
	|N/CL|=|\frac{N/C}{CL/C}|\mid|{\rm Aut}(R)|.
\end{equation*}

	Therefore, $|K/L|\mid|{\rm Aut}(R)|$ and consequently, $|S|\mid|{\rm Aut}(R)|$. 
	
	2) Let $K/L\leq CL/L$. Since $CL/L\cong C/C_{L}(R)$, it follows that $C/C_{L}(R)$ contains a subgroup isomorphic to $K/L$. Recall that $C=C_G(R)$.
	Hence if $a\in\pi_{e}(S)=\pi_e(K/L)$ and $r^{\alpha}\in\pi_{e}(R)$, then $a\in\pi_{e}(C_{G}(R))$
	and so ${\rm lcm}(r^{\alpha},a)\in\pi_{e}(G)$ which completes the proof. 
\end{proof}

\section{Groups with a non-complete prime graph}

\begin{lemma}\label{l1} Let $G$ be a finite group, $K_{1}$ and
$K_{2}$ two normal subgroups of $G$ and $\rho$ an independent
subset of $\ga(G)$. Then either $\pi(K_{1})\cap\rho\subseteq\pi(K_{2})\cap\rho$
or $\pi(K_{2})\cap\rho\subseteq\pi(K_{1})\cap\rho$. Moreover, if
$N$ is the product of all normal subgroups $K$ of $G$ such that
$|\pi(K)\cap\rho|\leq1$, then $|\pi(N)\cap\rho|\leq1$.
\end{lemma}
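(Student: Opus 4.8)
The plan is to prove the two assertions in turn, with the comparability statement carrying essentially all the content and the ``moreover'' part following formally from it. For the comparability claim I would argue by contradiction: if neither $\pi(K_{1})\cap\rho\subseteq\pi(K_{2})\cap\rho$ nor the reverse inclusion holds, then there exist primes $p\in(\pi(K_{1})\setminus\pi(K_{2}))\cap\rho$ and $q\in(\pi(K_{2})\setminus\pi(K_{1}))\cap\rho$. Since $p\notin\pi(K_{2})$ and $q\notin\pi(K_{1})$, and since $K_{1}\cap K_{2}\leq K_{i}$, both $p$ and $q$ are coprime to $|K_{1}\cap K_{2}|$. The idea is to manufacture an element of order $pq$ by passing to the quotient where $K_1$ and $K_2$ become independent.

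I would set $\bar{G}=G/(K_{1}\cap K_{2})$ and consider the images $\bar{K_{1}},\bar{K_{2}}$. These are normal subgroups of $\bar{G}$ with $\bar{K_{1}}\cap\bar{K_{2}}=1$, so they centralize one another, because $[\bar{K_{1}},\bar{K_{2}}]\leq\bar{K_{1}}\cap\bar{K_{2}}=1$. As $K_{1}\cap K_{2}$ is coprime to $p$ and to $q$, we have $p\mid|\bar{K_{1}}|$ and $q\mid|\bar{K_{2}}|$, so Cauchy's theorem supplies $\bar{x}\in\bar{K_{1}}$ of order $p$ and $\bar{y}\in\bar{K_{2}}$ of order $q$; since they commute and $\langle\bar{x}\rangle\cap\langle\bar{y}\rangle\leq\bar{K_{1}}\cap\bar{K_{2}}=1$, the element $\bar{x}\bar{y}$ has order $pq$, whence $pq\in\pi_{e}(\bar{G})$. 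The step I expect to be the crux is the observation that $\pi_{e}(G/M)\subseteq\pi_{e}(G)$ for every normal $M$: a preimage of an element of order $m$ has order a multiple $n$ of $m$, and then its $(n/m)$-th power has order exactly $m$. Applying this with $M=K_{1}\cap K_{2}$ yields $pq\in\pi_{e}(G)$, so $p$ and $q$ are adjacent in $\ga(G)$, contradicting that $\{p,q\}\subseteq\rho$ is independent. This contradiction forces comparability.

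For the ``moreover'' part, let $K_{1},\dots,K_{t}$ be all the normal subgroups of $G$ with $|\pi(K_{i})\cap\rho|\leq1$, so that $N=K_{1}\cdots K_{t}$. Since $\pi(K_{i}K_{j})=\pi(K_{i})\cup\pi(K_{j})$, an easy induction gives $\pi(N)\cap\rho=\bigcup_{i}(\pi(K_{i})\cap\rho)$. By the first assertion the finitely many sets $\pi(K_{i})\cap\rho$ are pairwise comparable, hence form a chain under inclusion, so their union coincides with the largest member, say $\pi(K_{i_{0}})\cap\rho$. As $|\pi(K_{i_{0}})\cap\rho|\leq1$ by the choice of the $K_{i}$, we conclude $|\pi(N)\cap\rho|\leq1$. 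In summary, the only genuine obstacle is the comparability statement, and inside it the ``lifting'' fact $\pi_{e}(G/M)\subseteq\pi_{e}(G)$, which is exactly what converts the element of order $pq$ built in the quotient into a contradiction with independence in $G$ itself.
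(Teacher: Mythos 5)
Your proposal is correct and follows essentially the same route as the paper: both arguments reduce to the isomorphism $K_{1}K_{2}/(K_{1}\cap K_{2})\cong K_{1}/(K_{1}\cap K_{2})\times K_{2}/(K_{1}\cap K_{2})$ (which you verify via the commutator argument, while the paper simply invokes it), produce an element of order $pq$ in that quotient, and lift it to $G$ to contradict independence. Your treatment of the ``moreover'' part via pairwise comparability and the chain of sets $\pi(K_{i})\cap\rho$ is just a slightly more explicit version of the paper's inductive closing remark.
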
 
\begin{proof}
For $1\leq i\leq2$, put $\pi_{i}=\pi(K_{i})\cap\rho$. If $\pi_1\nsubseteq\pi_2$ and $\pi_2\nsubseteq\pi_1$, then there exist two primes $p_{1}$ and $p_{2}$ such that
$p_{1}\in\pi_{1}\setminus\pi_{2}$ and $p_{2}\in\pi_{2}\setminus\pi_{1}$. This implies that $p_{1}\in\pi(K_{1}/(K_{1}\cap K_{2}))$ and $p_{2}\in\pi(K_{2}/(K_{1}\cap K_{2}))$.
By the following relation: 
\[
\frac{K_{1}K_{2}}{K_{1}\cap K_{2}}\cong\frac{K_{1}}{K_{1}\cap K_{2}}\times\frac{K_{2}}{K_{1}\cap K_{2}}
\]
it follows that $K_{1}K_{2}$ contains an element of order $p_{1}p_{2}$,
which contradices to the assumption. Therefore, $\pi_{1}\subseteq\pi_{2}$
or $\pi_{2}\subseteq\pi_{1}$ and consequently, there is $i\in\{1, 2\}$, such that $\pi(K_{1}K_{2})\cap\rho\subseteq \pi_i$. 
Also this implies that if $|\pi_1| \leq 1$ and $|\pi_2| \leq 1$, then $|\pi(K_{1}K_{2})\cap\rho|\leq 1$.

Finnaly, let $N$ be the product of all normal subgroups $K$ of $G$ such that
$|\pi(K)\cap\rho|\leq1$. Then by the above discussion, $|\pi(N)\cap\rho|\leq1$, which completes the proof. 
\end{proof}
We note that by the previous lemma, if $\rho$ is an independent subset of $\ga(G)$ such that $|\rho|\geq 2$, then $G$ contains a normal subgroup $N$ which is the largest normal subgroup of $G$ among the normal subgroups of $G$ with the property $|\pi(N)\cap\rho|\leq 1$.
\begin{theorem}\label{main} Let $G$ be a finite group and $\rho$
be an independent subset of $\ga(G)$ such that $|\rho|\geq2$. Then
one of the following assertions holds:

$1)$ $G$ has a normal series $1\leq N\leq L\leq G$, where $L/N={\rm Soc}(G/N)$ is the socle of $G/N$. Moreover, in this case $\pi(N)\cap\rho=\{p\}$, $\pi(L/N)\cap\rho=\{q\}$ and $\rho=\{p, q\}$.

$2)$ There exists a normal subgroup $N$ of $G$ and a non-abelian
simple group $S$ such that 
\[
S\leq\frac{G}{N}\leq{\rm Aut}(S),
\]
where $|\pi(N)\cap\rho|\leq 1$ and $|\pi(S)\cap\rho|\geq2$. Moreover, if $|\rho|\geq 3$, then $|\pi(S)\cap\rho|\geq|\rho|-1$.
\end{theorem}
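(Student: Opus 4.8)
The plan is to take $N$ to be the largest normal subgroup of $G$ with $|\pi(N)\cap\rho|\leq 1$, whose existence is guaranteed by the remark following Lemma \ref{l1}. Then I would pass to the quotient $\bar{G}=G/N$ and analyze its socle. The key point is that by maximality of $N$, every minimal normal subgroup of $\bar{G}$ must meet $\rho$ in at least two primes when we try to enlarge $N$; more precisely, any normal subgroup of $G$ strictly containing $N$ has $|\pi(\cdot)\cap\rho|\geq 2$. First I would recall that ${\rm Soc}(\bar{G})$ is a direct product of minimal normal subgroups, each of which is either an elementary abelian $p$-group or a direct product of isomorphic non-abelian simple groups.

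Next I would split into cases according to whether ${\rm Soc}(\bar{G})$ contains a non-abelian simple factor. Suppose first that every minimal normal subgroup of $\bar{G}$ is abelian (elementary abelian). Let $L$ be the preimage in $G$ of ${\rm Soc}(\bar{G})$, so that $L/N={\rm Soc}(\bar{G})$. I claim we land in case $1)$. Since $N$ is the largest normal subgroup with $|\pi(N)\cap\rho|\leq 1$, and $L/N$ is a nontrivial abelian normal subgroup, we must have $|\pi(L)\cap\rho|\geq 2$. On the other hand, I would argue using Corollary \ref{lem natalia} part $2)$: if $|\pi(L)\cap\rho|\geq 3$, pick a prime $p\in\pi(N)\cap\rho$ (or handle the $\pi(N)\cap\rho=\emptyset$ case separately) together with two primes $q,r\in\pi(L/N)\cap\rho$; since $L/N$ is solvable, the corollary forces $p,q,r$ to fail to be pairwise nonadjacent, contradicting that $\rho$ is independent. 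A careful bookkeeping of which prime sits in $N$ versus in $L/N$ then pins down $\pi(N)\cap\rho=\{p\}$, $\pi(L/N)\cap\rho=\{q\}$, and $\rho=\{p,q\}$, giving assertion $1)$.

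Now suppose ${\rm Soc}(\bar{G})$ has a non-abelian part, and let $S$ be a non-abelian simple direct factor of it. The standard fact that $S\leq\bar{G}/C_{\bar{G}}({\rm Soc})\leq{\rm Aut}(S)$ (for a single non-abelian factor) or its minimal-normal-subgroup analogue gives the sandwich $S\leq G/N\leq {\rm Aut}(S)$ after possibly re-choosing $N$ to be the centralizer's preimage. I would then verify $|\pi(S)\cap\rho|\geq 2$: if $S$ met $\rho$ in at most one prime, then the preimage of $S$ in $G$ would be a normal subgroup properly containing $N$ with $|\pi(\cdot)\cap\rho|\leq 1$ (using Lemma \ref{l1} to combine with $N$), contradicting maximality of $N$. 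The final clause, that $|\pi(S)\cap\rho|\geq|\rho|-1$ when $|\rho|\geq 3$, I would obtain by applying Lemma \ref{pl1} or Corollary \ref{lem natalia} to the series $1\leq N\leq SN\leq G$: any prime of $\rho$ lying outside $\pi(N)$ and outside $\pi(S)$ would have to sit in $\pi(G/SN)$, and together with a prime from $N$ and a prime from $S$ this would violate independence, forcing at most one prime of $\rho$ to escape $\pi(S)$.

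The main obstacle I anticipate is the careful handling of the socle when it is a \emph{product} of several minimal normal subgroups, rather than a single one, so that the clean ${\rm Aut}(S)$ sandwich does not apply verbatim. In that situation I would invoke Lemma \ref{l1} to show that at most one minimal normal subgroup can meet $\rho$ in two or more primes: indeed, if two distinct minimal normal subgroups $K_1,K_2$ each met $\rho$ in two primes, Lemma \ref{l1} (applied to these two normal subgroups) would force one of $\pi(K_1)\cap\rho$, $\pi(K_2)\cap\rho$ to contain the other, and combined with the product structure this contradicts independence once one checks that $K_1K_2$ would contain an element whose order is divisible by two nonadjacent primes. This reduces the analysis to essentially one "large" factor $S$, after which the bounds $|\pi(S)\cap\rho|\geq 2$ and $|\pi(S)\cap\rho|\geq|\rho|-1$ follow as above; the bookkeeping of absorbing the remaining small factors into a redefined $N$ is the delicate part.
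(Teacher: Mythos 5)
Your choice of $N$ (the largest normal subgroup with $|\pi(N)\cap\rho|\leq 1$) and the overall strategy match the paper's, but your case division is not the right one, and it leads to a genuine error. The paper splits according to whether every minimal normal subgroup of $G/N$ meets $\rho$ in exactly one prime (which yields assertion (1) \emph{regardless} of whether those minimal normal subgroups are abelian) or some minimal normal subgroup meets $\rho$ in at least two primes (which yields assertion (2)). You instead split on whether ${\rm Soc}(G/N)$ has a non-abelian factor, and in the non-abelian case you claim $|\pi(S)\cap\rho|\geq 2$ by arguing that otherwise the preimage of $S$ in $G$ would be a normal subgroup properly containing $N$ with $|\pi(\cdot)\cap\rho|\leq 1$, contradicting maximality. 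This step fails: if $M$ is that preimage, then $\pi(M)\cap\rho=(\pi(N)\cap\rho)\cup(\pi(M/N)\cap\rho)$, which can have size $2$ even when each piece has size at most $1$ (one prime of $\rho$ coming from $N$, a different one from $S$), so $M$ does not violate the maximality of $N$. Lemma \ref{l1} does not rescue you, because it combines normal subgroups \emph{of $G$} each having the ``at most one prime of $\rho$'' property, and $M$ is not such a product: $S$ need not lift to a normal subgroup of $G$ complementing $N$ inside $M$. Concretely, a group in which $\pi(N)\cap\rho=\{p\}$ and ${\rm Soc}(G/N)$ is a single non-abelian simple group whose order is divisible by exactly one prime $q$ of $\rho$ falls into your ``non-abelian'' case; your argument would derive a contradiction there, yet no contradiction exists and the correct conclusion is assertion (1) of the theorem.

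Two further, smaller gaps. First, your argument for $|\pi(S)\cap\rho|\geq|\rho|-1$ only treats an escaping prime of $\rho$ sitting in $\pi(G/SN)$ together with one prime from $N$ and one from $S$ (via Lemma \ref{pl1}); you must also rule out the configuration in which \emph{two} primes of $\rho$ escape into $\pi(G/SN)$ while none lies in $\pi(N)$, which requires Corollary \ref{lem natalia}(2) together with the solvability of ${\rm Aut}(S)/S$, as the paper does. Second, the sandwich $S\leq G/N\leq{\rm Aut}(S)$ needs ${\rm Soc}(G/N)$ to be a \emph{single} simple group; the paper obtains $t=1$ and excludes $M_1/N\cong S^k$ with $k\geq 2$ by producing elements of order $q_1q_2$ with $q_1,q_2\in\rho$ from the direct-product structure, whereas your proposal of ``re-choosing $N$ to be the centralizer's preimage'' would in general destroy the property $|\pi(N)\cap\rho|\leq 1$ that assertion (2) requires.
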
 
\begin{proof}
Let $G$ be a finite group, $\rho$ be an independent subset
of $\ga(G)$ such that $|\rho|\geq 2$ and $N$ be the product of all normal subgroups $K$ of
$G$ such that $|\pi(K)\cap\rho|\leq1$. Also let $L/N$ be the socle
of $G/N$. By Lemma \ref{l1}, $|\pi(N)\cap\rho|\leq1$. Let $M_{1}/N,\ldots,M_{t}/N$ be
the minimal normal subgroups of $G/N$
such that $L/N\cong M_{1}/N\times\cdots\times M_{t}/N$. We know that for
each $1\leq i\leq t$, $M_{i}/N$ is a direct product of some isomorphic
simple groups. Also since  $N$ is a pure subgroup of $M_{i}$, $|\pi(M_{i})\cap\rho|>1$ and so
$|\pi(M_{i}/N)\cap\rho|\geq1$. In the sequel, we consider the following cases, seperaitly:

1) Let for every $1\leq i\leq t$, $|\pi(M_{i}/N)\cap\rho|=1$. In view of the definition of $N$, we conclude that there exist two distinct primes $p$ and $q$ such that $\pi(N)\cap \rho=\{p\}$ and for every $1\leq i\leq t$, $\pi(M_i/N)\cap \rho=\{q\}$. This implies that $\pi(L/N)\cap\rho=\{q\}$. 

By the above discussion, $\{p, q\}\subseteq\rho$. Let there exist $r\in\rho\setminus\{p, q\}$. Recall that, $\pi(N)\cap \rho=\{p\}$ and $\pi(L/N)\cap\rho=\{q\}$. This shows that $r\in\pi(G/L)$. On the other hand, $\{p, q, r\}$ is an independent subset of $\Gamma(G)$, which contradictes to Lemma \ref{pl1}. Therefore, $\rho=\{p, q\}$, which get the assertion (1) in the theroem.

2) Let there exist $1\leq i\leq t$, such that $|\pi(M_{i}/N)\cap\rho|\geq2$. Without lose of generality, suppose that $|\pi(M_{1}/N)\cap\rho|\geq2$.
In this case, if $t\geq2$, then $M_{1}/N\times M_{2}/N$ contains
an element of order $pq$ where $p\in\pi(M_{1}/N)\cap\rho$ and $q\in\pi(M_{2}/N)\cap\rho$,
which is a contradiction. Thus, $t=1$. Also since $M_{1}/N$ is
a direct product of some isomorphic simple groups, by a similar argument, we conclude that $L/N=M_{1}/N$ is isomorphic to a non-abelian
simple group. Then in this case, $C_{G/N}(L/N)=1$ since $L/N$ is the socle of $G/N$. Let $L/N$ be isomorphic to a non-abelian simple group $S$. So the following relation holds:
\[
S\leq\bar{G}:=\frac{G}{N}\leq{\rm Aut}(S).
\]
We recall that in this case, $L/N=M_1/N\cong S$ and by the assumption $|\pi(M_{1}/N)\cap\rho|\geq2$. So $|\pi(S)\cap\rho|\geq 2$. 

Finnaly, we prove that if $|\rho|\geq 3$, then $|\pi(S)\cap\rho|\geq|\rho|-1$. On the
contrary, let $|\rho|\geq 3$ and $|\pi(S)\cap\rho|\leq|\rho|-2$. This implies that there are two distinct primes $p$ and $q$ in $\rho$ such that $\{p, q\}\subseteq\pi(N)\cup\pi(\bar{G}/S)$ and $\{p, q\}\cap\pi(S)=\emptyset$. Since $|\rho|\geq 3$, if $\{p,q\}\subseteq\pi(\bar{G}/S)$, then by Corollary \ref{lem natalia} (Assertion 2), we get a contradiction since $\bar{G}/S$ is solvable. Similarly, if $p\in\pi(N)$ and $q\in\pi(\bar{G}/S)$, then by Lemma \ref{pl1}, we arraive a contradiction. Therefore, when $|\rho|\geq 3$, we deduce that $|\pi(S)\cap\rho|\geq|\rho|-1$ which completes the proof.
\end{proof}
\textbf{Example.} Let $G=11^{2}:SL_{2}(5)$ , which is a Frobenius
group with kernel $11^{2}$ and complement $SL_{2}(5)$. In the prime
graph of $G$, the subsets $\rho_{1}=\{2,11\}$ and $\rho_{2}=\{11,3,5\}$
are two independent subsets. If we choose $\rho_{1}$ as the independent subset said in Theorem \ref{main}, then we have $N=11^{2}$ and $L=11^2 : 2$ which shows that Case (1) of Theorem \ref{main} holds. Also if we choose $\rho_{2}$ as the independent subset $\rho$ in Theorem \ref{main}, then $N=11^{2} : 2$ and we have 
\[
PSL_{2}(5)\leq G/N\leq{\rm Aut}(PSL_{2}(5)),
\]
which satisfies Case (2) of Theorem \ref{main}.

Now by Theorem \ref{main}, we can easily get the following two corollaries
which modify \cite[Lemma 10]{key-14} and \cite[Lemma 2.3]{K4}:

\begin{corollary}

If $G$ is a finite group and $\rho$ an independent subset of $\ga(G)$
such that $|\rho|\geq3$, then there exists a nonabelian simple group
$S$ and a normal subgroup $N$ of $G$ such that 
\[
S\leq\frac{G}{N}\leq{\rm Aut}(S),
\]
and also we have $|\pi(S)\cap\rho|\geq|\rho|-1$ and $|\pi(N)\cap\rho|\leq1$.

\end{corollary}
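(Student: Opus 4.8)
The plan is to obtain the corollary as an immediate specialization of Theorem \ref{main}. Since $|\rho|\geq 3\geq 2$, the hypotheses of that theorem are met, so either Case (1) or Case (2) holds for $G$ and $\rho$. My first step would be to eliminate Case (1): there the theorem concludes $\rho=\{p,q\}$, whence $|\rho|=2$, contradicting $|\rho|\geq 3$. Thus only Case (2) can occur.

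In Case (2) we are handed a normal subgroup $N$ of $G$ and a nonabelian simple group $S$ with $S\leq G/N\leq{\rm Aut}(S)$ together with $|\pi(N)\cap\rho|\leq 1$. The last sentence of Case (2) applies exactly because $|\rho|\geq 3$ and supplies $|\pi(S)\cap\rho|\geq|\rho|-1$. Assembling these three facts reproduces the statement of the corollary verbatim.

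There is essentially no hard step here: the corollary is just Case (2) of Theorem \ref{main} read off under the stronger hypothesis $|\rho|\geq 3$. The only thing worth flagging is that this stronger hypothesis is precisely what both forces the dichotomy to collapse to Case (2) (via the characterization $\rho=\{p,q\}$ of Case (1)) and activates the sharper bound $|\pi(S)\cap\rho|\geq|\rho|-1$; so the role of $|\rho|\geq 3$ is doing double duty, and that is the one observation to make explicit.
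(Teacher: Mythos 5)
Your proposal is correct and matches the paper's intent exactly: the paper states this corollary as an immediate consequence of Theorem \ref{main}, and your argument (Case (1) is impossible since it forces $|\rho|=2$, so Case (2) applies and its final clause yields $|\pi(S)\cap\rho|\geq|\rho|-1$) is precisely the reasoning the paper leaves implicit.
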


\begin{corollary}\label{coro-main}

Let $G$ be a finite group, $\rho$ be an independent subset of $\Gamma(G)$
such that $|\rho|\geq2$. Also let for every distinct prime numbers $p$
and $q$ belong to $\rho$ we have $p \nmid (q^j - 1)$ and $q \nmid (p^i - 1)$ where 
$1 < p^i \leq |G|_p$ and $1 < q^j \leq |G|_q$. Then there exists
a non-abelian simple group $S$ such that 
\[
S\leq\frac{G}{O_{\rho'}(G)}\leq{\rm Aut}(S),
\]
and also we have $\rho\subseteq\pi(S)$ and $\rho\cap\pi({\rm Out}(S))=\emptyset$.

\end{corollary}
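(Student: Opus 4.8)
The plan is to apply Theorem \ref{main} as the main engine and then upgrade its conclusion using the strong arithmetic hypothesis on $\rho$. First I would set $N := O_{\rho'}(G)$, the largest normal $\rho'$-subgroup of $G$; since $N$ has order coprime to every prime in $\rho$, we have $\pi(N)\cap\rho=\emptyset$, so certainly $|\pi(N)\cap\rho|\leq 1$. The goal is to show that $N$ is exactly the normal subgroup produced by Theorem \ref{main} and that the simple group $S$ in the middle of the sandwich absorbs all of $\rho$.

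The key step is to rule out Case (1) of Theorem \ref{main} and to force $\rho\subseteq\pi(S)$ in Case (2). The arithmetic hypothesis is tailored precisely for this: the condition $p\nmid(q^j-1)$ and $q\nmid(p^i-1)$ for all admissible prime powers is exactly what Corollary \ref{lem natalia}(1) needs to derive a contradiction. Concretely, suppose a prime $p\in\rho$ lies in $\pi(N')$ for some normal subgroup $N'$ while another prime $q\in\rho$ lies in $\pi(G/N')$; then Corollary \ref{lem natalia}(1) gives $q\mid(|N'|_p-1)$, and since $|N'|_p$ is a prime power $p^i$ with $1<p^i\leq|G|_p$, this contradicts $q\nmid(p^i-1)$. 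So no prime of $\rho$ can be separated from another prime of $\rho$ across a normal section. In particular, Case (1), where $\rho=\{p,q\}$ splits as $\pi(N)\cap\rho=\{p\}$ and $\pi(L/N)\cap\rho=\{q\}$, is immediately impossible. Thus Case (2) holds, giving a non-abelian simple $S$ with $S\leq G/N_0\leq{\rm Aut}(S)$ for the $N_0$ in that theorem, and $|\pi(S)\cap\rho|\geq 2$.

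Next I would promote $|\pi(S)\cap\rho|\geq 2$ to the full inclusion $\rho\subseteq\pi(S)$. If some $q\in\rho$ were not in $\pi(S)$, then $q$ would divide either $|N_0|$ or $|G/N_0 : S|$, i.e. $q\in\pi(\bar G/S)$ with $\bar G/S\leq{\rm Out}(S)$ solvable. Picking any $p\in\pi(S)\cap\rho$, the pair $\{p,q\}$ would be separated by a normal section of $G$ (either $p$ inside a normal subgroup containing the preimage of $S$ and $q$ in the quotient, or via the solvable sandwich layer), and again Corollary \ref{lem natalia}(1)—or Lemma \ref{pl1} together with the solvability of $\bar G/S$—yields $q\mid(p^i-1)$ or $p\mid(q^j-1)$ for appropriate prime powers, contradicting the hypothesis. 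Hence every prime of $\rho$ lies in $\pi(S)$, so $\rho\subseteq\pi(S)$; and since the primes of $\rho$ cannot divide $|{\rm Out}(S)|$ (that would again separate them across the solvable outer layer), we get $\rho\cap\pi({\rm Out}(S))=\emptyset$. Finally, because $\pi(N_0)\cap\rho=\emptyset$ forces $N_0$ to be a $\rho'$-group, one checks $N_0\leq O_{\rho'}(G)=N$, and conversely $N\leq N_0$ since adjoining the $\rho'$-group $N$ does not disturb the socle structure; identifying $N_0$ with $O_{\rho'}(G)$ gives the stated sandwich.

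The main obstacle I anticipate is the bookkeeping in the last paragraph: precisely matching the normal subgroup $N_0$ delivered abstractly by Theorem \ref{main} with the concrete $O_{\rho'}(G)$, and confirming that the quotient layer $\bar G/S$ really sits inside ${\rm Out}(S)$ so that its solvability (hence the applicability of Corollary \ref{lem natalia}(2)) is guaranteed. The arithmetic contradictions themselves are routine once the normal-section structure is in place, but one must be careful that the prime powers $p^i$ arising as $|N'|_p$ genuinely satisfy $1<p^i\leq|G|_p$ so that the hypothesis applies verbatim.
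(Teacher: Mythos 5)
Your proposal is correct and takes essentially the same route as the paper, whose entire proof is the sentence ``It immediately comes from Theorem \ref{main} and Corollary \ref{lem natalia}''; you are simply supplying the details the authors leave implicit (using Corollary \ref{lem natalia}(1) plus the arithmetic hypothesis to kill Case (1) of Theorem \ref{main}, to force $\rho\subseteq\pi(S)$, and to identify the normal subgroup with $O_{\rho'}(G)$). One small caveat, which your write-up shares with the paper's own statement: the argument only shows that no prime of $\rho$ divides $|(G/O_{\rho'}(G))/S|$, not that $\rho\cap\pi({\rm Out}(S))=\emptyset$ for the full outer automorphism group, but the weaker conclusion is all the paper ever uses.
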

\begin{proof}
It immediately comes from Theorem \ref{main} and Corollary \ref{lem natalia}.
\end{proof}
\section{Proof of Theorem \ref{Main 2}}
Recall that in number theory $Landau(n)$ is a familar notation for $m_1(S_n)$.
\begin{lemma}\label{Lem4.1} Let $A_{n}$ be an alternating group. If $n\geq25$
	or $n\in\{21,22\}$, then $m_{1}(A_n)\geq pq$ for all distinct prime $p$ and $q$ in $\pi(A_n)$.
\end{lemma}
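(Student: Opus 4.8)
The plan is to first reduce the statement to a single pair of primes. If $p_{1}>p_{2}$ denote the two largest primes not exceeding $n$, then for every pair of distinct primes $p,q\in\pi(A_{n})$ (that is, for every pair of primes $\leq n$) we have $pq\leq p_{1}p_{2}$; hence it suffices to produce in $A_{n}$ an element of order at least $p_{1}p_{2}$. By Lemma \ref{sec:Preliminary-Results-A_n and S_n} this becomes a purely arithmetic task: exhibit pairwise coprime prime powers $p_{1}^{a_{1}},\dots,p_{s}^{a_{s}}$ whose product is at least $p_{1}p_{2}$ and whose sum is at most $n$ when the product is odd, or at most $n-2$ when the product is even. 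Choosing all the prime powers odd (equivalently, building a permutation all of whose cycles have odd length) is the most convenient option, since such a permutation is automatically even and only the weaker bound $\sum p_{i}^{a_{i}}\leq n$ is then required.

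For large $n$ I would settle the inequality by invoking an explicit lower bound for Landau's function $m_{1}(S_{n})=Landau(n)$. First note the elementary fact $m_{1}(A_{n})\geq m_{1}(S_{n-2})$: take an element $\sigma$ of $S_{n-2}$ of maximal order acting on $n-2$ of the $n$ points; if $\sigma$ is already even it lies in $A_{n}$, and otherwise multiplying $\sigma$ by a transposition on the two remaining points yields an even permutation whose order is still at least that of $\sigma$. Combining this with a known bound of the form $\log m_{1}(S_{m})\geq\sqrt{m\ln m}$, valid for all $m$ beyond an explicit threshold, gives $m_{1}(A_{n})\geq e^{\sqrt{(n-2)\ln(n-2)}}$; since $e^{\sqrt{(n-2)\ln(n-2)}}>n^{2}>p_{1}p_{2}$ for every $n\geq 26$, the claim follows for all $n$ past that threshold. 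Alternatively one can avoid the analytic input and simply estimate the product of the first few prime powers that fit in the budget $n$, which already exceeds $n^{2}$ once $n$ is moderately large.

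It then remains to treat $n\in\{21,22,25\}$ together with the finitely many $n$ between $26$ and the threshold used above; here I would verify $m_{1}(A_{n})\geq p_{1}p_{2}$ directly, either by the Maple routine of the appendix or by explicit witnesses. For instance, for $n\in\{21,22\}$ the prime powers $4,3,5,7$ have sum $19\leq n-2$ and product $420$, giving an even element of order $420>17\cdot 19=p_{1}p_{2}$; for $n=25$ the odd prime powers $9,5,11$ have sum $25$ and product $495$, so $m_{1}(A_{25})\geq 495>19\cdot 23=p_{1}p_{2}$.

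The principal obstacle is precisely this boundary region rather than the asymptotics: near $n=25$ the gap $m_{1}(A_{n})-p_{1}p_{2}$ is small (indeed for the deliberately excluded values $n=23,24$ one has $m_{1}(A_{n})<p_{1}p_{2}$, e.g. $m_{1}(A_{23})=420<437$), so the crude bound $m_{1}(A_{n})>n^{2}$ is unavailable and one must compare against the actual product $p_{1}p_{2}$. Care is therefore needed to confirm that no exceptional value of $n$ survives between $26$ and the point where the Landau estimate takes over, which is exactly what the finite computation checks.
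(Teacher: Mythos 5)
Your proposal is correct and follows essentially the same route as the paper: both reduce via $m_{1}(A_{n})\geq m_{1}(S_{n-2})=Landau(n-2)$ using Lemma \ref{sec:Preliminary-Results-A_n and S_n}, invoke the Massias--Nicolas--Robin lower bound $Landau(m)\geq e^{\sqrt{m\ln m}}$ to beat $n^{2}$ (the paper uses $(n-2)^{3}$) for all $n$ past the explicit threshold $906$, and dispose of the remaining finitely many values by direct computation with the appendix routine. Your explicit witnesses for $n\in\{21,22,25\}$ and the observation about why $n=23,24$ genuinely fail are a nice addition but do not change the structure of the argument.
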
 
\begin{proof}
Let $p$ and $q$ be two distincet primes in $\pi(A_n)$. By the definition of $m_{1}(A_{n})$ and Lemma \ref{sec:Preliminary-Results-A_n and S_n}, $m_{1}(A_{n})\geq m_{1}(S_{n-2})=Landau(n-2)$. In view of \cite{key-15}, if $n\geq906$, then
\begin{equation*}
Landau(n)\geq e^{\sqrt{n\,ln(n)}}.
\end{equation*}
Hence,
\[
m_{1}(A_{n})\geq  e^{\sqrt{(n-2)\,ln(n-2)}}.
\]
On the other hand, by the hypothesis, $(n-2)^{3}>n\,(n-2)\geq p\,q$. Using an easy computation, we can show that if $n\geq906$, then
\[
 e^{\sqrt{(n-2)\,ln(n-2)}}\geq(n-2)^{3},
\]
Thus, by the above argument if $n\geq906$, then $m_{1}(A_{n})\geq(n-2)^{3}$ and consequently, $m_{1}(A_{n})>pq$. Finnaly, by the program in the appendix, and an easy compution we deduce that if $25\leq n\leq905$
or $n\in\{21,22\}$, then $m_{1}(A_{n})\geq p\,q$ which completes the proof. 
\end{proof}
{\bf Proof of Theorem \ref{Main 2}.}
Let $G$ be a finite group such that $|G|=|A_{n}|$ and $m_{1}(G)=m_{1}(A_{n})$ where $n\leq20$ or $n\in\{23,24\}$. If $n\leq 4$, then it is obvious that $G\cong A_n$. Let $n\geq 5$. By \cite[Theorem 1]{comm}, if $n\in\{5, 6\}$ , then $G\cong A_n$.  

Let $7\leq n\leq 20$ or $n\in\{23,24\}$. By Table 1, there exists an independent subset $\rho$ of $\Gamma(G)$ such that $\rho$ satisfies the conditions of Corollay \ref{coro-main}, which implies that there is a non-abelian simple group $S$ such that
\[
S\leq\bar{G}:=\frac{G}{M}\leq {\rm Aut}(S)
\]
where $M=O_{\rho'}(G)$, $\rho\subseteq \pi(S)$ and $\pi(\bar{G}/S)\cap \rho=\emptyset$. Moreover, by the assumption $|S|\mid |A_n|$ and . In view of \cite[Table 1]{key-9}, the possible cases for $S$ are indicated in Table 1. Hence, if $n\in\{7,13,14,17,19,23\}$, then by Table 1, $S\cong A_{n}$ and so $G\cong A_n$ since $|G|=|A_n|$. In the sequel, for the other cases, suppose that $S$ is not isomorphic to $A_n$.

Let $n=8$. By Table l, $S\cong A_{7}$ or $L_{3}(4)$. If $S\cong A_{7}$,
then $G/M$ is isomorphic to either $A_{7}$ or $S_{7}$ and $|M|\mid8$.
On the other hand, $A_{7}$ and $S_{7}$ do not contain any element
of order $15$, in while $m_{1}(G)=m_{1}(A_{8})=15$, which is a contradiction.
If $S\cong L_{3}(4)$, then $|S|=|A_{8}|$ and so $G\cong L_{3}(4)$
which is impossible since by \cite{key-13}, $m_{1}(L_{3}(4))=7$.

Let $n=9$. By Table l, $S\cong A_{8},A_{7}$ or $L_{3}(4)$.
If $S\cong A_{8},A_{7}$ or $L_{3}(4)$, then $7\mid |G/M|$ and $|M|_{3}=3$ or $9$.
By Corollary \ref{lem natalia}, we get that $G$ contains an element
of order $21$ which is a contradiction since $m_{1}(G)=m_{1}(A_{9})=15$.

Let $n=10$. By Table l, $S\cong J_{2}$. Then by \cite{key-13}, we deduce that $|M|=9$ and $S$ contains an element of order $10$. Hence by Lemma \ref{lem:S<G/M}, we get that $G$ contains
an element of order $30$ which is a contradiction since $m_{1}(G)=m_{1}(A_{10})=21$.

Let $n=11$. By Table l, $S\cong M_{22}$. Then $11\mid |S|$ and $|M|_{3}=3^{2}$. So by Corollary \ref{lem natalia}, we get that $G$ contains an element of order $33$
which is a contradiction since $m_{1}(G)=m_{1}(A_{11})=21$. 

Let $n=12$. By Table l, $S\cong A_{11}$ or $M_{22}$. Let
$S\cong M_{22}$. Then $11\mid |S|$ and $|M|_5=5$. So by Lemma \ref{lem:S<G/M}, we get that $55\in\pi_e(G)$ which is impossible since $m_1(G)=m_1(A_{12})=35$. Let $S\cong A_{11}$. Then $|M|_{3}=3$ and $S$ contains an element of order $20$. So by Lemma \ref{lem:S<G/M}, we get that $60\in\pi_e(G)$ which is a contradiction.

Let $n=14$. By Table l, $S\cong A_{13}$. Then $|M|_{7}=7$. So $|M|_7=7$ and $S$ contains an
element of order $30$. Hence by Lemma \ref{lem:S<G/M}, we get that
$210\in\pi_{e}(G)$ which is a contradiction since $m_{1}(G)=m_{1}(A_{14})=60$.

Let $n=15$. By Table l, $S\cong A_{14}$ or $A_{13}$. Let
$S\cong A_{13}$ or $A_{14}$. Then $|M|_{5}=5$ and $S$ contains an
element of order $28$. Hence by Lemma \ref{lem:S<G/M}, we get that
$140\in\pi_{e}(G)$ which is a contradiction since $m_{1}(G)=m_{1}(A_{15})=105$.

Let $n=16$. By Table l, $S\cong A_{15},A_{14}$ or $A_{13}$.
We note that $m_{1}(G)=m_{1}(A_{16})=105$. So if $S\cong A_{13}$
or $A_{14}$, then similar to the case $n=15$, we get that $140\in\pi_{e}(G)$ which is a contradiction.
Let $S\cong A_{15}$. In this case, we have $S$ contains an element
of order $105$ and also $|M|=8$ or $16$. Thus, by Lemma \ref{lem:S<G/M},
we get that $210\in\pi_{e}(G)$ which is impossible. 

Let $n=18$. By Table l, $S\cong A_{17}$ and $m_{1}(G)=m_{1}(A_{18})=140$.
If $S\cong A_{17}$, then $|M|_{3}=9$ and $70\in\pi_e(S)$. So by Lemma \ref{lem:S<G/M},
$210\in\pi_{e}(G)$, which is impossible. 

Let $n=20$. By Table l, $S\cong A_{19}$ and $m_{1}(G)=m_{1}(A_{20})=210$.
If $S\cong A_{19}$, then $|M|_{5}=5$ and $77\in\pi_{e}(S)$, and so by Lemma \ref{lem:S<G/M}, $5\cdot77\in\pi_{e}(G)$,
which is impossible. 

Let $n=24$. By Table l, $S\cong A_{23}$ and $m_{1}(G)=m_{1}(A_{24})=420$.
If $S\cong A_{23}$, then $|M|_{3}=3$ and $385\in\pi_{e}(A_{23})$ and so by Lemma \ref{lem:S<G/M}, $3\cdot385\in\pi_{e}(G)$, which is impossible. 

Finally, by the above discussions we conclude that if $|G|=|A_n|$ and $m_1(G)=m_1(A_n)$, then $S\cong A_n$ and consequently, $G\cong A_n$ which completes the proof.
\section*{
\begin{table}
\protect\protect\protect\caption{\label{tab:1-1}$|S|\mid|A_{n}|$ and $\rho\subseteq\pi(S)$}
\protect\centering{}%
\begin{tabular}{|c|c|c|c|c|}
\hline 
$n$ & $|A_{n}|$ & $m_{1}(A_{n})$ & $\rho$ & $S$\tabularnewline
\hline 
$7$ & $2^{3}\cdot3^{2}\cdot5\cdot7$ & $7$ & $\{5,7\}$ & $A_{7}$\tabularnewline
\hline 
$8$ & $2^{6}\cdot3^{2}\cdot5\cdot7$ & $15$ & $\{5,7\}$ & $A_{8},L_{3}(4),A_{7}$\tabularnewline
\hline 
$9$ & $2^{6}\cdot3^{4}\cdot5\cdot7$ & $15$ & $\{5,7\}$ & $A_{9},A_{8},L_{3}(4),A_{7}$\tabularnewline
\hline 
$10$ & $2^{7}\cdot3^{4}\cdot5^{2}\cdot7$ & $21$ & $\{5,7\}$ & $A_{10},J_{2}$\tabularnewline
\hline 
$11$ & $2^{7}\cdot3^{4}\cdot5^{2}\cdot7\cdot11$ & $21$ & $\{7,11\}$ & $A_{11},M_{22}$\tabularnewline
\hline 
$12$ & $2^{9}\cdot3^{5}\cdot5^{2}\cdot7\cdot11$ & $35$ & $\{7,11\}$ & $A_{12},A_{11},M_{22}$\tabularnewline
\hline 
$13$ & $2^{9}\cdot3^{5}\cdot5^{2}\cdot7\cdot11\cdot13$ & $35$ & $\{7,11,13\}$ & $A_{13}$\tabularnewline
\hline 
$14$ & $2^{10}\cdot3^{5}\cdot5^{2}\cdot7^{2}\cdot11\cdot13$ & $60$ & $\{11,13\}$ & $A_{13},A_{14}$\tabularnewline
\hline 
$15$ & $2^{10}\cdot3^{6}\cdot5^{3}\cdot7^{2}\cdot11\cdot13$ & $105$ & $\{11,13\}$ & $A_{13},A_{14},A_{15}$\tabularnewline
\hline 
$16$ & $2^{14}\cdot3^{6}\cdot5^{3}\cdot7^{2}\cdot11\cdot13$ & $105$ & $\{11,13\}$ & $A_{13},A_{14},A_{15},A_{16}$\tabularnewline
\hline 
$17$ & $2^{14}\cdot3^{6}\cdot5^{3}\cdot7^{2}\cdot11\cdot13\cdot17$ & $105$ & $\{11,13,17\}$ & $A_{17}$\tabularnewline
\hline 
$18$ & $2^{15}\cdot3^{8}\cdot5^{3}\cdot7^{2}\cdot11\cdot13\cdot17$ & $140$ & $\{11,13,17\}$ & $A_{18},A_{17}$\tabularnewline
\hline 
$19$ & $2^{15}\cdot3^{8}\cdot5^{3}\cdot7^{2}\cdot11\cdot13\cdot17\cdot19$ & $210$ & $\{13,17,19\}$ & $A_{19}$\tabularnewline
\hline 
$20$ & $2^{17}\cdot3^{8}\cdot5^{4}\cdot7^{2}\cdot11\cdot13\cdot17\cdot19$ & $210$ & $\{13,17,19\}$ & $A_{19},A_{20}$\tabularnewline
\hline 
$23$ & $2^{18}\cdot3^{9}\cdot5^{4}\cdot7^{3}\cdot11^{2}\cdot13\cdot17\cdot19\cdot23$ & $420$ & $\{19,23\}$ & $A_{23}$\tabularnewline
\hline 
$24$ & $2^{21}\cdot3^{9}\cdot5^{4}\cdot7^{3}\cdot11^{2}\cdot13\cdot17\cdot19\cdot23$ & $420$ & $\{19,23\}$ & $A_{23},A_{24}$\tabularnewline
\hline 
\end{tabular}\protect
\end{table}
}

\part*{Appendix}

{\it with(NumberTheory):  with(ArrayTools):\\   m\_1Alt:=proc(n) local l,T\_o,T\_e,i,t,T;\\
 l:=proc(m) 
 local S,A,B,k,r ; \\ S:=0:A:=ifactors(m):B:=A[2]: k:=Size(B): \\
 for r from 1 to $\frac{k[2]}{2}$ do $S:=S+(B[r][1])^{B[r][2]}$ end do: S ; end proc:\\
$T\_o:=\{\}: T\_e:=\{\}$:  \\
for i from 1 to Landau(n) do if i::even and $l(i)\leq n-2$\\ then $T\_e:=T\_e ~union~ \{i\}$ end if ; end do: \\
for i from 1 to Landau(n) do if i::odd  and $l(i)\leq n$ \\
then $T\_o:=T\_o ~union~ \{i\}$ end if ; end do: \\ $T:=T\_o ~union~ T\_e$:~t:=max(T): \\
 print( $^\prime$ The order of Alt(n) $^\prime$=ifactor($\frac{Factorial(n)}{2}$)); \\
  print( $^\prime$ The spectrum of Alt(n) $^\prime$=T); \\
   print( $^\prime$ The largest element order of Alt(n) $^\prime$=t);  end proc:
}
\\

\end{document}